\theoremstyle{plain}
\newtheorem{thm}{Theorem}[section]
\newtheorem{lem}[thm]{Lemma}
\newtheorem{clm}[thm]{Claim}
\begin{document}
\title{Convex Hull of Face Vectors of Colored Complexes}

\author{Afshin Goodarzi}
\thanks{}

\address{Royal Institute of Technology, Department of Mathematics, S-100 44, Stockholm, Sweden}
\email{afshingo@kth.se}
\maketitle

\begin{abstract}
 In this paper we verify a conjecture by Kozlov (Discrete Comput Geom 18 (1997) 421--431), 
 which describes the convex hull of the set of face vectors of $r$-colorable complexes on 
 $n$ vertices. As part of the proof we derive a generalization of Tur\'{a}n's graph theorem.

\end{abstract}

\section{Introduction}

Let $\Delta$ be a simplicial complex on $n$ vertices and let $\Delta_k$ be the set of all
faces of $\Delta$ of cardinality $k$. The face vector of $\Delta$ is 
$f(\Delta)=(n, f_2, f_3, \ldots)$ where $f_k$ is the cardinality of $\Delta_k$.
A simplicial complex $\Delta$ is said to be $r$-colorable if its underlying graph
(i.e., the graph with the same vertices as $\Delta$ and with edges $\Delta_2$) is
$r$-colorable.

Throughout this paper, by a graph $G$ we mean a finite graph without any loops or
multiple edges. The set of vertices and edges of $G$ will be denoted by $V(G)$ and 
$E(G)$, respectively. The cardinality of $V(G)$ and $E(G)$ are \emph{order} and 
\emph{size} of $G$. A $k$-\emph{clique} in $G$ is a complete induced subgraph of $G$ of
order $k$. The \emph{clique vector} of $G$ is $c(G)=(c_1(G), c_2(G), \ldots)$, where 
$c_k(G)$ is the number of $k$-cliques in $G$. The \emph{Tur\'{a}n graph} $T(n,r)$ is the
complete $r$-partite graph of order $n$ with cardinality of the maximal independent sets
``as equal as possible''.

A vector $g\in\mathbb{R}^d$ will be called positive if it has positive coordinates. 
The $k$-\emph{truncation} of $g$, denoted by $g^k$, is the vector whose first $k$
coordinates are equal to the coordinates of $g$, and the rest are equal to zero, for $k= 1, 2, \ldots, d$.

Kozlov conjectured~\cite[Conjecture 6.2]{K} that the convex hull of the face vectors of
$r$-colorable complexes on $n$ vertices has a simple description in term of the clique vector of the Tur\'{a}n graph. The main result of this paper is to show the validity of his conjecture, more precisely:

\begin{thm}\label{main}
The convex hull of $f$-vectors of $r$-colorable complexes on $n$ vertices is generated
by the truncations of the clique vector of Tur\'{a}n graph $T(n,r)$.
\end{thm}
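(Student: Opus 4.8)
The plan is to establish the theorem by proving two inclusions: that every $f$-vector of an $r$-colorable complex lies in the convex hull of the truncations of $c(T(n,r))$, and conversely that each such truncation is itself achievable (or a limit/combination of achievable points), so the convex hull is exactly this polytope. The clique vector of the Turán graph plays the role of the extreme generator because, among all $r$-colorable graphs on $n$ vertices, $T(n,r)$ should simultaneously maximize the number of $k$-cliques for every $k$; this is precisely the ``generalization of Turán's graph theorem'' advertised in the abstract, and I expect it to be the technical heart of the argument.

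First I would reduce the problem from simplicial complexes to graphs. Since an $r$-colorable complex $\Delta$ is defined purely through the $r$-colorability of its underlying graph $G$, and since any face of $\Delta$ of cardinality $k$ must be a $k$-clique of $G$, we have $f_k(\Delta) \le c_k(G)$ coordinatewise. Moreover, given any $r$-colorable graph $G$, the clique complex (flag complex) whose faces are exactly the cliques of $G$ is an $r$-colorable complex realizing $f_k = c_k(G)$. Thus the set of face vectors of $r$-colorable complexes is, up to taking coordinatewise lower-comparable integer points, governed by the clique vectors of $r$-colorable graphs, and its convex hull is controlled by the clique vectors that are coordinatewise-maximal. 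Then I would prove the Turán-type extremal statement: for every $r$-colorable graph $G$ on $n$ vertices and every $k$, one has $c_k(G) \le c_k(T(n,r))$. The standard approach is a symmetrization/shifting argument — repeatedly moving a vertex from a smaller color class to balance the classes, or a Zykov-type symmetrization that merges non-adjacent vertices — and showing each such operation does not decrease any $c_k$, driving $G$ toward the complete balanced $r$-partite graph $T(n,r)$.

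With the extremal inequalities in hand, the forward inclusion follows: any face vector $f(\Delta)$ is dominated coordinatewise by $c(G) \le c(T(n,r))$, and a point dominated coordinatewise by $c(T(n,r))$ that has the correct first coordinate $n$ lies in the convex hull of the truncations $c(T(n,r))^1, c(T(n,r))^2, \ldots, c(T(n,r))^d$ together with suitable scaling. Concretely, since the truncations are nested and positive, their convex hull is exactly the set of vectors $g$ with $0 \le g_d \le g_{d-1} \le \cdots$ after normalizing against the coordinates of $c(T(n,r))$; I would verify that every achievable $f$-vector satisfies these inequalities and conversely that each truncation is realized by an appropriate subcomplex of the clique complex of $T(n,r)$ (taking all faces up to a fixed dimension). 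The reverse inclusion amounts to exhibiting, for each $k$, an $r$-colorable complex whose face vector is the truncation $c(T(n,r))^k$ — namely the $k$-skeleton of the clique complex of $T(n,r)$ — which is manifestly $r$-colorable.

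The main obstacle I anticipate is the Turán-type inequality $c_k(G) \le c_k(T(n,r))$ for all $k$ simultaneously. Classical Turán theory controls $c_2$ (edges), and Zykov's theorem extends this to clique counts for fixed $r$, but I must ensure the symmetrization operations preserve $r$-colorability and weakly increase \emph{every} clique count $c_k$ at once, not merely one of them; handling the parity of class sizes so that the optimum is the balanced graph $T(n,r)$ rather than some unbalanced complete $r$-partite graph requires care. A secondary subtlety is translating the coordinatewise domination into membership in the convex hull of truncations, since the convex-hull description is not simply ``all coordinatewise-smaller vectors'' but involves the specific nested structure of the truncations; I would need a clean lemma characterizing $\mathrm{conv}\{g^1, \ldots, g^d\}$ for a positive vector $g$ and check that the face vectors land in it.
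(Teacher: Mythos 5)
Your overall architecture (Tur\'an/Zykov-type extremal statement for clique vectors, plus a lemma characterizing the convex hull of the truncations of a positive vector) matches the paper's in outline, and your reverse inclusion (skeleta of the clique complex of $T(n,r)$ realize the truncations) is fine. But the forward inclusion as you propose it has a fatal gap, which you flag as a ``secondary subtlety'' when it is in fact the main difficulty. The convex hull $\mathcal{C}_g$ of the origin and the truncations $g^1,\ldots,g^d$ is \emph{not} the set of vectors coordinatewise dominated by $g$ with correct first coordinate: the paper's Lemma~\ref{th} shows that $f\in\mathcal{C}_g$ iff $f_1\le g_1$ and $f_ig_j\le f_jg_i$ for $j<i$, i.e.\ the normalized coordinates $f_k/g_k$ must be \emph{non-increasing in $k$}. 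For example, with $g=(2,1)$ the point $(1,1)$ is coordinatewise below $g$ but lies outside the triangle $\mathrm{conv}\{(0,0),(2,0),(2,1)\}$. Consequently the chain $f_k(\Delta)\le c_k(G)\le c_k(T(n,r))$ that your reduction produces proves only $f_k/t_k\le 1$ for each $k$ separately; it says nothing about the monotonicity of $k\mapsto f_k(\Delta)/t_k(n,r)$, which is what membership actually requires.

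What is really needed is the ratio inequality $f_k(\Delta)/f_{k-1}(\Delta)\le t_k(n,r)/t_{k-1}(n,r)$ for \emph{arbitrary} $r$-colorable complexes, and this does not reduce to clique complexes of $r$-colorable graphs: a general complex can have $f_k$ close to $c_k(G)$ while $f_{k-1}$ is much smaller than $c_{k-1}(G)$, so domination by a graph's clique vector gives no control on the quotient. This is a colored Kruskal--Katona-type statement, and the paper spends all of Section~\ref{5} on it: one first passes to an $r$-color shifted complex (via Frankl--F\"uredi--Kalai, which preserves $f_k$ and does not increase $f_{k-1}$), then uses a vertex-compression operator $\Delta_{u\to 1}$ and induction on $r$ via the link of a vertex to compare $\Delta$ with a genuine graph $G^k\in\mathcal{G}(n,r)$ at the level of the \emph{ratio} $f_k/f_{k-1}$. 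Even at the graph level, the statement you plan to prove ($c_k(G)\le t_k(n,r)$, Zykov's theorem) is weaker than what the argument consumes, namely the monotone chain $c_k(G)/t_k(n,r)\le c_{k-1}(G)/t_{k-1}(n,r)$ of the paper's Theorem~\ref{cv}. So the symmetrization step is the right instinct, but you would need to run it on the quotients $c_k/c_{k-1}$ (and separately on $f_k/f_{k-1}$ for complexes), not on the individual clique counts.
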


The structure of the paper is as follows. In Section~\ref{3}, we set up a method for 
finding the convex hull of the skeleta of a positive vector. The generalization of 
Tur\'{a}n's graph theorem will be proved in Section~\ref{4}. Finally, 
in Section~\ref{5} we will prove our main result.

\section{Thales' Lemma}\label{3}

Let $g=(g_1, \ldots, g_d)$ be a positive vector in $\mathbb{R}^d$ and denote by 
$\mathcal{C}_g$ the convex hull generated by the origin and all truncations of $g$. 
If $g\in\mathbb{R}^2$, then $\mathcal{C}_g$ is the boundary and interior of a right 
angle triangle. In this case using Thales' Intercept theorem, one can see that a 
positive vector $(a,b)$ is in $\mathcal{C}_g$ if and only if $a\leq g_1$ and $(b/a)\leq 
(g_2/g_1)$. The following result is a generalization of this simple observation.

\begin{lem}\label{th}
Let  $g=(g_1, \ldots, g_d)$ and $f=(f_1, \ldots, f_d)$ be two positive vectors. Then 
$f\in\mathcal{C}_g$ if and only if $f_1\leq g_1$ and $f_ig_j\leq f_jg_i$ for all 
$1\leq j< i\leq d$.
\end{lem}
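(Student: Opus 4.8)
The plan is to set up an explicit dictionary between the coefficients of a convex combination and the coordinate ratios $f_m/g_m$. First I would write a generic point of $\mathcal{C}_g$ as
\[
 f = \lambda_0\cdot 0 + \sum_{k=1}^d \lambda_k\, g^k, \qquad \lambda_k\geq 0,\quad \sum_{k=0}^d \lambda_k = 1,
\]
where the generator indexed by $0$ is the origin. The key observation is that the $m$-th coordinate of the truncation $g^k$ equals $g_m$ when $k\geq m$ and $0$ otherwise. Hence, introducing the tail sums $S_m := \sum_{k=m}^d \lambda_k$, the $m$-th coordinate of $f$ becomes $f_m = g_m S_m$. Since every $g_m$ is positive, this is equivalent to $S_m = f_m/g_m$.

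Next I would translate the simplex constraints on the $\lambda_k$ into constraints on these ratios. Because $\lambda_m = S_m - S_{m+1}$ (with the convention $S_{d+1}=0$) and $\lambda_0 = 1 - S_1$, the nonnegativity of all $\lambda_k$ together with $\sum_k \lambda_k = 1$ is equivalent to the single chain
\[
 1 \geq S_1 \geq S_2 \geq \cdots \geq S_d \geq 0.
\]
Reading this through $S_m = f_m/g_m$ yields exactly $f_1\leq g_1$ together with $f_i/g_i \leq f_j/g_j$ for all $j<i$; clearing denominators (legitimate as all entries are positive) turns the latter into $f_i g_j \leq f_j g_i$, which are precisely the stated conditions.

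For the forward direction I would simply read off these inequalities from $f\in\mathcal{C}_g$ using the computation above. For the converse I would \emph{define} $\lambda_m := (f_m/g_m) - (f_{m+1}/g_{m+1})$ and $\lambda_0 := 1 - f_1/g_1$, verify from the hypotheses that all of them are nonnegative, and check by telescoping that they sum to $1$, thereby exhibiting $f$ as the required convex combination of the origin and the truncations.

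There is no serious obstacle here: the whole argument is a clean change of variables from the coefficients $\lambda_k$ to the tail sums $S_m$. The only point demanding mild care is the bookkeeping—treating the origin's coefficient $\lambda_0$ as the ``slack'' that lets the remaining coefficients sum to at most one, and confirming the telescoping identity $\sum_{m=1}^d (S_m - S_{m+1}) + (1-S_1) = 1$ so that the reconstructed combination is genuinely convex.
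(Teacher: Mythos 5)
Your argument is correct and is essentially the paper's own proof: both decompose $f$ in the basis of truncations $g^1,\dots,g^d$, observe that the $m$-th coordinate is $g_m$ times the tail sum of coefficients, and translate the convexity constraints (nonnegativity and sum at most one) into the chain $1\geq f_1/g_1\geq\cdots\geq f_d/g_d$. Your explicit tail-sum notation $S_m$ and the telescoping check are just a cleaner write-up of the same computation.
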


\begin{proof}
The vectors $g^1, \ldots, g^d$ form a basis for $\mathbb{R}^d$. So there exists 
$c=(c_1, \ldots, c_d)\in \mathbb{R}^d$ such that $f=\sum_1^d c_ig^i$. So we have
\begin{eqnarray*}
f_d&=&c_d g_d, \\
f_{d-1}&=& (c_{d-1}+c_d) g_{d-1},\\
&\vdots&\\
f_1&=&(c_1+\ldots+c_d)g_1.
\end{eqnarray*}
On the other hand, $f\in\mathcal{C}_g$ if and only if $c_j\geq 0$ for all $j$ and 
$\sum c_i\leq 1$. Therefore we have $f\in\mathcal{C}_g$ if and only if 
$f_1=(\sum c_i) g_1 \leq g_1$ and 
$f_ig_j= (c_i+\ldots +c_d) g_ig_j\leq (c_j+\ldots +c_i+\ldots +c_d) g_jg_i=f_jf_i$.
\end{proof}

In the special case where $g$ is the face vector of the $(n-1)$-dimensional simplex, the
result above is already contained in the work of Kozlov~\cite[Section 5]{K}. His proof, 
however, works in the general case as well. 

\section{Tur\'{a}n Graphs}\label{4}
Let us denote by $\mathcal{G}(n,r)$ the set of all graphs $G$ of order $n$ and clique 
number $\omega(G)\leq r$. Tur\'{a}n graph has many extremal behaviors among all graphs 
in $\mathcal{G}(n,r)$. Recall that \emph{Tur\'{a}n graph} $T(n,r)$ is the complete 
$r$-partite graph of order $n$ with cardinality of the maximal independent sets as 
equal as possible. We will denote by $t_k(n,r)$ the number of $k$-cliques in $T(n,r)$.

In 1941 Tur\'{a}n proved that among all graphs in $\mathcal{G}(n,r)$, the Tur\'{a}n 
graph $T(n,r)$ has the maximum number of edges. This result, \emph{Tur\'{a}n's graph 
theorem}, is a cornerstone of Extremal Graph Theory. There are many different and 
elegant proofs of Tur\'{a}n's graph theorem. Some of these proofs were discussed 
in~\cite{A} and in~\cite[Chapter 36]{AZ}.

Later, in 1949, Zykov generalized Tur\'{a}n's graph theorem by showing that 
$c_k(G)\leq t_k(n,r)$ for all $G\in\mathcal{G}(n,r)$ and all $k$. Here we state and 
prove a generalization of Zykov's result.

\begin{thm}\label{cv}
For any graph $G\in\mathcal{G}(n,r)$ and for each $k\in\{2, \ldots, r\}$, one has
$$\frac{c_r(G)}{t_r(n,r)}\leq\ldots\leq\frac{c_k(G)}{t_k(n,r)}\leq
\frac{c_{k-1}(G)}{t_{k-1}(n,r)}\leq\ldots\leq\frac{c_2(G)}{t_2(n,r)}\leq 1.$$
\end{thm}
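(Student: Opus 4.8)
The plan is to prove the chain of inequalities by establishing each individual link separately. The rightmost inequality $c_2(G)/t_2(n,r) \leq 1$ is exactly Tur\'{a}n's classical theorem (or Zykov's result with $k=2$), so that anchor is free. For the remaining inequalities, the key is to show that for each $k \in \{2, \ldots, r\}$ one has the single comparison
\begin{equation*}
\frac{c_k(G)}{t_k(n,r)} \leq \frac{c_{k-1}(G)}{t_{k-1}(n,r)},
\end{equation*}
which after cross-multiplying becomes $c_k(G)\, t_{k-1}(n,r) \leq c_{k-1}(G)\, t_k(n,r)$. Chaining these together then yields the whole statement.

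First I would look for a way to relate the number of $k$-cliques to the number of $(k-1)$-cliques via a counting argument. The natural idea is a double-counting of incident pairs $(\sigma, \tau)$ where $\tau$ is a $(k-1)$-clique contained in a $k$-clique $\sigma$. Each $k$-clique contains exactly $k$ sub-$(k-1)$-cliques, so the number of such pairs is $k\, c_k(G)$. On the other hand, each $(k-1)$-clique $\tau$ extends to a $k$-clique by adding a vertex adjacent to all of $\tau$; summing over $\tau$ gives a bound in terms of common neighborhoods. The same bookkeeping performed on the Tur\'{a}n graph $T(n,r)$ produces exact identities relating $t_k$ and $t_{k-1}$. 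The hope is that comparing the averaged extension counts for $G$ against those for $T(n,r)$ yields the desired cross-multiplied inequality.

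A cleaner route, which I would pursue in parallel, is to exploit the explicit formula for $t_k(n,r)$. Since $T(n,r)$ is complete $r$-partite, a $k$-clique is a choice of $k$ of the $r$ parts together with one vertex from each chosen part; this gives a clean combinatorial expression, and in particular the ratios $t_k(n,r)/t_{k-1}(n,r)$ have a transparent meaning as a normalized count of how cliques grow. The strategy is then to set up the inequality $c_k(G)\, t_{k-1}(n,r) \leq c_{k-1}(G)\, t_k(n,r)$ as a statement about averaging: the quantity $c_k/c_{k-1}$ measures the average number of ways a $(k-1)$-clique of $G$ extends to a $k$-clique, and Tur\'{a}n-type extremality should force this average to be dominated by the corresponding ratio for $T(n,r)$. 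An induction on $k$, or on the structure of $G$ via a symmetrization / Zykov-style shifting argument that moves $G$ toward a complete multipartite graph without increasing the relevant ratio, looks like the most promising mechanism.

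The main obstacle I anticipate is precisely controlling the ratio $c_k(G)/c_{k-1}(G)$ under the extremal comparison: unlike the absolute bounds $c_k(G) \leq t_k(n,r)$ that Zykov established, a ratio inequality is not automatically preserved under the usual symmetrization moves, since both numerator and denominator change simultaneously. The delicate point will be to verify that whatever operation transforms $G$ into $T(n,r)$ either leaves $c_k\, t_{k-1} - c_{k-1}\, t_k$ nonincreasing or permits a clean induction; I expect the crux of the argument to lie in a careful local analysis of how adding or reassigning edges affects these two clique counts in tandem, and in confirming that equality is achieved exactly at the balanced complete $r$-partite graph.
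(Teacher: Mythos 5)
You have correctly reduced the theorem to the single link $c_k(G)\,t_{k-1}(n,r)\leq c_{k-1}(G)\,t_k(n,r)$ for each $k$, and you have correctly identified Zykov-style symmetrization as the right mechanism. But the proposal stops exactly at the point you yourself flag as the crux: you never explain why the ratio $q_k(G)=c_k(G)/c_{k-1}(G)$ can be controlled under the symmetrization move, and neither of your two parallel routes (double counting of clique extensions, or the explicit formula for $t_k$) is carried far enough to close that gap. As written, this is a plan rather than a proof.

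The missing idea is a combination of an extremal choice and a mediant argument. Take $G\in\mathcal{G}(n,r)$ maximizing $q_k$, and for nonadjacent $u,v$ let $G_{u\rightarrow v}$ be the graph in which $u$'s neighborhood is replaced by $v$'s. Then for every $j$ one has $c_j(G_{u\rightarrow v})=c_j(G)-c_{j-1}(G[N(u)])+c_{j-1}(G[N(v)])$, and symmetrically for $G_{v\rightarrow u}$; adding these gives $c_j(G_{u\rightarrow v})+c_j(G_{v\rightarrow u})=2c_j(G)$ for all $j$. Hence $q_k(G)$ is a mediant of $q_k(G_{u\rightarrow v})$ and $q_k(G_{v\rightarrow u})$, so at least one of the two is $\geq q_k(G)$; by maximality of $G$ all three are equal, and the move can be applied without losing extremality. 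Iterating turns the maximizer into a complete multipartite graph with at most $r$ parts, and a further balancing step (which does not decrease any $c_j$) moves it to $T(n,r)$, yielding $q_k(G)\leq t_k(n,r)/t_{k-1}(n,r)$ for all $G$, i.e.\ the desired link. Without this observation — or some substitute for it — your anticipated obstacle (``both numerator and denominator change simultaneously'') remains unresolved, so the proof is incomplete. Your anchor $c_2(G)\leq t_2(n,r)$ via Tur\'an's theorem is fine, though it also falls out of the $k=2$ link since $c_1(G)=t_1(n,r)=n$.
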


\begin{proof}
Let $G\in\mathcal{G}(n,r)$. We may assume that $G$ is not complete and and for a fixed 
$k$, $q_k(G):=c_k(G)/ c_{k-1}(G)$ is maximum among all graphs in $\mathcal{G}(n,r)$. 
Let $u$ and $v$ be two disconnected vertices in $G$ and define $G_{u\rightarrow v}$
to be the graph with the same vertex set as $G$ and with edges 
$E(G_{u\rightarrow v})=\left(E(G)\cup(\cup_{w\in N(v)} \{u,w\})\right)\setminus 
(\cup_{z\in N(u)} \{u,z\})$.\\
The following properties can be simply verified
\begin{itemize}
\item[$\bullet$] $G_{u\rightarrow v}\in\mathcal{G}(n,r)$,
\item[$\bullet$] $c_k(G_{u\rightarrow v})=c_k(G)-c_{k-1}(G[N(u)])+c_{k-1}(G[N(v)])$.
\end{itemize}
On the other hand, it is straightforward to check that either one of 
$q_k(G_{u\rightarrow v})$ and $q_k(G_{v\rightarrow u})$ is strictly greater than 
$q_k(G)$, or they are all equal. Hence $q_k(G_{u\rightarrow v})$ is maximal.\\
Now consider all vertices of $G$ that are not connected to $v$. let us label them by 
$u_1, \ldots, u_m$. We define 
$$G^1:= G_{u_1\rightarrow v}, \ldots, G^j:=G^{j-1}_{u_j\rightarrow v}, 
\ldots, G^m:=G^{m-1}_{u_m\rightarrow v}.$$
If $G^m\setminus\{v, u_1, \ldots, u_m\}$ is a clique, then we stop. 
If not, there exists a vertex $w\in G^m\setminus\{v, u_1, \ldots, u_m\}$ which is 
not connected to all other vertices. We repeat the above process with $w$ and continue 
until the remaining vertices form a clique. So we will obtain a complete multipartite 
graph $H\in \mathcal{G}(n,r)$ such that $q_k(H)$ is maximum. If $H$ is a 
Tur\'{a}n graph, then we are done. If not there exist two maximal independent sets 
$I_1=\{w_1, \ldots, w_m\}$ and $I_2=\{z_1, \ldots, z_l\}$ such that $m-2\geq l$. 
Let $H'$ be the graph obtained by removing all edges of the form $w_mz_i$ and adding 
new edges $w_mw_i$ for all $1\leq i\leq l$. Then it is easy to see that for all $j$, 
$H'$ has as many $j$-cliques as $H$ has and, in particular $q_k(H')$ is maximum. 
Therefore $q_k(H'_{w_m\rightarrow z_1})$ is maximum as well and the result follows by
repeating the above process.

\end{proof}

\textbf{Remark 3.2.} The operator $G_{u\rightarrow v}$ in our proof is similar to operators
used in~\cite[p. 238]{AZ} and in~\cite[Theorem 3.3]{K}. However it may belong to 
``folklore'' graph theory, since its origin is not clear.

\section{Proof of Theorem~\ref{main}}\label{5}
In order to prove our main result, using Thales' Lemma~\ref{th}, it is enough to 
show that for any $r$-colorable complex $\Delta$ on $n$ vertices and for each $k$,
\begin{eqnarray*}\label{ineq}
f_k(\Delta)/f_{k-1}(\Delta)\leq t_k(n,r)/t_{k-1}(n,r).
\end{eqnarray*}
To prove inequalities above, we need further definitions.

Let $1\leq k\leq r$ be fixed integers and let us denote by $\mathbb{N}_i$ the set of
all positive integers whose residue modulo $r$ is equal to $i$. The set of all 
$r$-colored $k$-subsets is 
$$\mathcal{M}(k,r)=\left\{F\in {\mathbb{N} \choose k} \Big| \quad |F\cap\mathbb{N}_i|\leq 1
\text{ for all } i\right\}.$$ We consider the partial order $<_p$ on $\mathcal{M}(k,r)$
defined as follows. For $T=\{t_1,\ldots, t_k\}$ and $S=\{s_1,\ldots, s_k\}$ with
$t_1<\ldots<t_k$ and $s_1<\ldots <s_k$ in $\mathcal{M}(k,r)$, set $T<_p S$ if 
$t_i\leq s_i$ for every $1\leq i\leq k$. A family 
$\mathcal{F}\subseteq \mathcal{M}(k,r)$ is said to be $r$-\emph{color shifted} 
if whenever $S\in\mathcal{F}$, $T<_p S$, and $T\in\mathcal{M}(k,r)$ one has 
$T\in\mathcal{F}$.
A simplicial complex is said to be $r$-\emph{color shifted} if for any $k$ the set of
its $k$-faces is an $r$-color shifted family.
It is known that for any $r$-colorable complex $\Delta$ on $n$-vertices and for
any $k$ there exists a $r$-color shifted complex $\Gamma$ such that 
$f_k(\Delta)=f_k(\Gamma)$ and $f_{k-1}(\Delta)\geq f_{k-1}(\Gamma)$. 
(see \cite[Proposition 3.1]{FFK}, for instance.)

\begin{proof}
We use induction on $r$. For $r=1$, $\Delta$ is totally disconnected and the statement
clearly holds. Now assume that the statement holds for any $(r-1)$-colorable complex.
Fix a $k$ and let $\Delta$ be an $r$-colorable complex on $n$ vertices such that 
$$\frac{f_k(\Delta)}{f_{k-1}(\Delta)}=\max\left\{\frac{f_k(\Gamma)}{f_{k-1}(\Gamma)} 
\Big|\quad \Gamma\text{ is an $r$-colorable on $n$ vertices }\right\}.$$
We may assume that $\Delta$ is color shifted. We may also assume that for any $j\geq k$
if $\Delta$ contains the boundary of a $j$-simplex $\delta$, then $\Delta$ contains 
$\delta$ itself. Let $I_{(1)}=\{u_1, \ldots, u_{m-1}\}$ be the set of all vertices that
are not connected to the vertex $1$. For $u\in I_{(1)}$ define $\Delta_{u\rightarrow 1}$
to be the complex obtained by removing all faces which contain $\{u\}$ properly and 
adding new faces $F\cup\{u\}$ for all $F\in\mbox{link}_{\Delta} 1$. Note that if we have
an $r$-coloring of $\Delta$, it is possible that $u$ and a vertex in 
$\mbox{link}_{\Delta} 1$ has the same color, however we can change the color of $u$ with
the color of $1$, so this construction preserves $r$-colorability.\\
It is easy to see that 
$$f_j(\Delta_{u\rightarrow 1})=f_j(\Delta)-f_{j-1}(\mbox{link}_{\Delta} u)+
f_{j-1}(\mbox{link}_{\Delta} 1).$$
Hence $f_k(\Delta_{u\rightarrow 1})/f_{k-1}(\Delta_{u\rightarrow 1})$ is maximum
as well. So if we define 
$$\Lambda=(\ldots((\Delta_{u_1\rightarrow 1})_{u_2\rightarrow 1})\ldots)_{u_{m-1}\rightarrow 1},$$
then $\Lambda$ is $r$-colorable and $f_k(\Lambda)/f_{k-1}(\Lambda)$ is maximum,
since in each step our operator preserves $f_k/f_{k-1}$ and $r$-colorability.\\
Let us denote by $L$ and $D$, the subcomplex $\mbox{link}_{\Delta} 1$ and the subcomplex
of $\Delta$ induced by vertices of $\mbox{link}_{\Delta} 1$, respectively. 
It is easy to see that $f_j(\Lambda)=mf_{j-1}(L)+ f_j(D)$.

 \begin{clm} $D_j=L_j$, for any $j\geq k-1$.
 \end{clm}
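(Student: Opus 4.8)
The plan is to prove the two inclusions $L_j\subseteq D_j$ and $D_j\subseteq L_j$ separately, the first being essentially free. Since every vertex of $L=\mathrm{link}_\Delta 1$ is a neighbour of $1$ and $D$ is the induced subcomplex of $\Delta$ on exactly these vertices, any face of $L$ is a face of $\Delta$ supported on vertices of $D$, hence a face of $D$; thus $L_j\subseteq D_j$ for every $j$. It therefore remains to establish $D_j\subseteq L_j$ for $j\geq k-1$, which unwinds to the statement: for every $F\in D_j$ one has $F\cup\{1\}\in\Delta$ (equivalently $F\in L_j$). First I would record the one piece of colour bookkeeping that makes everything work: since each vertex of $F$ is adjacent to $1$ it receives a colour different from that of $1$, so $F\cup\{1\}$ contains no two vertices of the same colour, i.e.\ it lies in $\mathcal{M}(j+1,r)$, and likewise every subset of it lies in the appropriate $\mathcal{M}$.

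The heart of the argument is to produce, for a fixed $F=\{a_1<\dots<a_j\}\in D_j$, the entire boundary of the simplex on $F\cup\{1\}$ inside $\Delta$. This boundary has $j+1$ facets: the face $F$ itself, which lies in $\Delta$ by hypothesis, together with the faces $(F\setminus\{a_i\})\cup\{1\}$ for $1\leq i\leq j$. For each such $i$ I would compare $(F\setminus\{a_i\})\cup\{1\}$ with $F$ under the order $<_p$: written in increasing order the former is $(1,a_1,\dots,a_{i-1},a_{i+1},\dots,a_j)$ and the latter is $(a_1,\dots,a_j)$, so the entrywise comparison reads $1\leq a_1$, then $a_1\leq a_2,\dots,a_{i-1}\leq a_i$, and finally $a_{i+1}\leq a_{i+1},\dots,a_j\leq a_j$, all of which hold. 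Hence $(F\setminus\{a_i\})\cup\{1\}<_p F$, and since the smaller set is rainbow (it lies in $\mathcal{M}(j,r)$ by the remark above) the colour-shiftedness of the family of $j$-faces forces $(F\setminus\{a_i\})\cup\{1\}\in\Delta$.

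With all facets in hand, the boundary of the $(j+1)$-vertex simplex on $F\cup\{1\}$ is contained in $\Delta$. Because $j\geq k-1$ we have $j+1\geq k$, so the standing assumption that $\Delta$ fills in the boundary of every simplex on at least $k$ vertices applies and yields $F\cup\{1\}\in\Delta$; that is, $F\in L_j$. This closes the inclusion $D_j\subseteq L_j$ and hence proves $D_j=L_j$ for all $j\geq k-1$.

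I expect the only real subtlety to be getting the $<_p$ comparison exactly right and checking that each intermediate set is genuinely rainbow, since both conditions must be verified before colour-shiftedness may be invoked; everything else is routine bookkeeping. It is worth noting that this argument uses only the colour-shiftedness of $\Delta$ and the simplex-filling assumption, and not the maximality of $f_k/f_{k-1}$, so in particular no separate treatment of the boundary value $j=k-1$ is needed: the filling hypothesis is available for every $j\geq k-1$ at once.
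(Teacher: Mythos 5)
Your proposal is correct and follows essentially the same route as the paper: $L_j\subseteq D_j$ is immediate, and for $F\in D_j$ color-shiftedness (your explicit $<_p$ comparison of $(F\setminus\{a_i\})\cup\{1\}$ with $F$ is exactly what the paper summarizes as ``by the structure of $\Delta$'') puts every facet of $F\cup\{1\}$ in $\Delta$, after which the standing boundary-filling assumption gives $F\cup\{1\}\in\Delta$. Your added bookkeeping --- checking that each comparison set is rainbow, i.e.\ lies in $\mathcal{M}(j,r)$, before invoking shiftedness --- is a worthwhile detail the paper omits.
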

 \begin{proof}
 It is easy to see that $L_j\subseteq D_j$. So assume that $F\in D_j$. 
 For any $u\in F$ we have $F\setminus\{u\}\cup\{1\}\in\Delta$, by the structure of 
 $\Delta$. Hence the boundary of $F\cup\{1\}$ is in $\Delta$ and we have 
 $F\cup\{1\}\in\Delta$, therefore $F\in L_j$.
\end{proof}
So we have 
$$\frac{f_k(\Lambda)}{f_{k-1}(\Lambda)}=\frac{mf_{k-1}(L)+ f_k(L)}{mf_{k-2}(L)+ 
f_{k-1}(L)}.$$
On the other hand, since $L$ is $(r-1)$-colorable, there exists a graph 
$H\in\mathcal{G}(|V(L)|,r-1)$ such that
$f_t(L)/f_{t-1}(L)\leq c_t(H)/c_{t-1}(H)$ for any $2\leq t\leq r-1$. 
Denote by $G^k$ the graph obtained by joining $H$ and a totally disconnected graph on 
$m$ vertices. Clearly $G^k\in\mathcal{G}(n,r)$ and we have $c_t(G^k)=mc_{t-1}(H)+c_t(H)$
for all $t$. So we have
\begin{eqnarray*}
c_{k-1}(G^k)f_k(\Lambda)&=& (mc_{k-2}(H)+c_{k-1}(H))(mf_{k-1}(L)+f_k(L))\\
&=& m^2c_{k-2}f_{k-1}(L)+ mc_{k-2}(H)f_k(L)+\\ 
& & mf_{k-1}(L)c_{k-1}(H)+ c_{k-1}(H)f_k(L)\\
&\leq& m^2c_{k-1}f_{k-2}(L)+ mc_{k}(H)f_{k-2}(L)+\\ 
& & mf_{k-1}(L)c_{k-1}(H)+ c_{k}(H)f_{k-1}(L)\\
&=& c_{k}(G^k)f_{k-1}(\Lambda).\\
\end{eqnarray*}

So we have proved that for any $r$-colorable simplicial complex on $n$ vertices and
for a fixed $k$ there exists a graph $G^k\in\mathcal{G}(n,r)$ such that 
$f_k(\Delta)/f_{k-1}(\Delta)\leq c_k(G^k)/c_{k-1}(G^k).$ 
On the other hand by using Theorem~\ref{cv}, for all $k$, we have
$$\frac{c_k(G^k)}{c_{k-1}(G^k)}\leq\frac{t_k(n,r)}{t_{k-1}(n,r)},$$
as desired.
\end{proof}

\subsection*{Acknowledgments}
The author would like to thank Bruno Benedetti and Anders Bj\"{o}rner for helpful suggestions and
discussions.

\end{document}